\theoremstyle{plain} 
\newtheorem{thm}{Theorem}[section] 
\newtheorem{cor}[thm]{Corollary} 
\newtheorem{lem}[thm]{Lemma} 
\newtheorem{prop}[thm]{Proposition}
\theoremstyle{definition} 
\newtheorem{defi}{Definition}[section] 
\newtheorem{rmk}{Remark}[section]
\newcommand{\eps}{\varepsilon}
\numberwithin{equation}{section}
\title[An Upper Bound for the Moments of $\gcd(n,u_n)$]
      {An Upper Bound for the Moments of a G.C.D.\\
      related to Lucas Sequences}\thanks{The author is funded by a Departmental Award and by an EPSRC Doctoral Training Partnership Award.}
\author[D. Mastrostefano]{Daniele Mastrostefano}
\address{Mathematics Institute, Zeeman Building, University of Warwick, Coventry, CV4 7AL, UK}
\email{Daniele.Mastrostefano@warwick.ac.uk}
\keywords{G.C.D. function; Lucas sequences; moments of arithmetic functions.}
\subjclass{11B39 11B37 (Primary) 11A05 11N64 (Secondary)}
\begin{document}

\begin{abstract}
Let $(u_n)_{n \geq 0}$ be a non-degenerate Lucas sequence, given by the relation $u_n=a_1 u_{n-1}+a_2 u_{n-2}$. Let $\ell_u(m)=lcm(m, z_u(m))$, for $(m,a_2)=1$, where $z_u(m)$ is the rank of appearance of $m$ in $u_n$.
We prove that 
$$\sum_{\substack{m>x\\ (m,a_2)=1}}\frac{1}{\ell_u(m)}\leq \exp(-(1/\sqrt{6}-\eps+o(1))\sqrt{(\log x)(\log \log x)}),$$
when $x$ is sufficiently large in terms of $\eps$, and where the $o(1)$ depends on $u$. Moreover, if $g_u(n)=\gcd(n,u_n)$, we will show that for every $k\geq 1$,
$$\sum_{n\leq x}g_u(n)^{k}\leq x^{k+1}\exp(-(1+o(1))\sqrt{(\log x)(\log \log x)}),$$
when $x$ is sufficiently large and where the $o(1)$ depends on $u$ and $k$. This gives a partial answer to a question posed by C. Sanna. As a by-product, we derive bounds on $\#\{n\leq x: (n, u_n)>y\}$, at least in certain ranges of $y$, which strengthens what already obtained by Sanna. 
Finally, we start the study of the multiplicative analogous of $\ell_u(m)$, finding interesting results.
\end{abstract}

\maketitle

\section{Introduction}
\label{section 1}
Let $(u_n)_{n\geq 0}$ be an integral linear recurrence, that is, $(u_n)_{n\geq 0}$ is a sequence of integers and there exist $a_1, \dots, a_k\in\mathbb{Z}$, with $a_k\neq 0$, such that
$$u_{n}=a_{1}u_{n-1}+\cdots+a_{k}u_{n-k},$$
for all integers $n\geq k$, with $k$ a fixed positive integer. We recall that $(u_n)_{n\geq 0}$ is said to be non-degenerate if none of the ratios $\alpha_{i}/\alpha_{j}$ $(i \neq j)$ is a root of unity, where $\alpha_{1}, \dots,\alpha_{r}\in\mathbb{C}$ are all the pairwise distinct roots of the characteristic polynomial
$$f_{u}(X)=X^{k}-a_{1}X^{k-1}-\cdots-a_{k}.$$
Moreover, $(u_n)_{n\geq 0}$ is said to be a Lucas sequence if $u_0=0, u_1=1,$ and $k=2$. We note that the Lucas sequence with $a_1=a_2=1$ is known as the Fibonacci sequence. We refer the reader to \cite[Chapter 1]{EPSW} for the basic terminology and theory of linear recurrences.

The function $g_{u}(n):=\gcd(n,u_n)$ has attracted the interest of several authors. For example, the set of fixed points of $g_{u}(n)$, or equivalently the set of positive numbers $n$ such that $n|u_n$, has been studied by Alba~Gonz\'alez, Luca, Pomerance, and Shparlinski \cite{ALPS}, under the mild hypotheses that $(u_{n})_{n\geq 0}$ is non-degenerate and that its characteristic polynomial has only simple roots. Moreover, this problem has been studied also by Andr\'e-Jeannin \cite{J}, Luca and Tron \cite{LT}, Sanna \cite{S2}, Smyth \cite{SM} and Somer \cite{SO}, when $(u_{n})_{n\geq 0}$ is a Lucas or the Fibonacci sequence.

On the other hand, Sanna and Tron \cite{S3, ST} have analysed the fiber $g_{u}(y)^{-1}$, when $(u_{n})_{n\geq 0}$ is non-degenerate and $y=1$, and when $(u_{n})_{n\geq 0}$ is the Fibonacci sequence and $y$ is an arbitrary positive integer. Moreover, the image $g_{u}(\mathbb{N})$ has been investigated by Leonetti and Sanna \cite{LS}, again when $(u_{n})_{n\geq 0}$ is the Fibonacci sequence.

Other important questions about the function $g_{u}(n)$ are related to its behaviour on average and its distribution as arithmetic function. From now on, we focus on the specific case in which $(u_n)_{n \geq 0}$ is a non-degenerate Lucas sequence with non-zero discriminant $\Delta_u = a_1^2 + 4a_2$. Otherwise, the sequence reduces to $u_n=n\alpha^{n}$, for a suitable $\alpha\in\mathbb{Z}$, and $g_u(n)=n$, for every positive integer $n$. Even in this particular situation, it is very difficult to find information on the distribution of $g_{u}(n)$, because of its oscillatory behaviour. For this reason, it is natural to consider the flatter function $\log(g_{u}(n))$, for which an asymptotic formula for its mean value, and more in general for its moments, has been given by Sanna, who proved the following theorem \cite[Theorem 1.1]{S}.
\begin{thm}
\label{thm 1.1}
Fix a positive integer $\lambda$ and some $\eps>0$. Then, for all sufficiently large $x$, how large depending on $a_1,a_2,\lambda$ and $\eps$, we have
\begin{equation}
\label{eq: 1.1}
\sum_{n\leq x}(\log g_{u}(n))^{\lambda}=M_{u,\lambda}x+E_{u,\lambda}(x),
\end{equation}
where $M_{u,\lambda}>0$ is a constant depending on $a_1,a_2$ and $\lambda$, and the error term is bounded by 
$$E_{u, \lambda}(x)\ll_{u,\lambda}x^{(1+3\lambda)/(2+3\lambda)+\eps}.$$
\end{thm}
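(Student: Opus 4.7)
The plan is to expand $(\log g_u(n))^\lambda$ via the identity $\log m = \sum_{d \mid m}\Lambda(d)$, giving
$$(\log g_u(n))^\lambda = \sum_{d_1,\dots,d_\lambda}\Lambda(d_1)\cdots\Lambda(d_\lambda)\,\mathbf{1}_{\{d_i \mid g_u(n)\,\forall i\}}.$$
Each $d_i$ is a prime power, and for a prime power $d$ with $(d,a_2)=1$ the condition $d\mid g_u(n)$ is equivalent to $\ell_u(d)\mid n$. Consequently the joint divisibility condition translates to $\ell_u(\mathrm{lcm}(d_1,\dots,d_\lambda))\mid n$, after we separate out the contribution of the finitely many prime powers dividing $a_2$. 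Swapping summation and using $\#\{n\leq x : q\mid n\} = x/q + O(1)$ then yields
$$\sum_{n\leq x}(\log g_u(n))^\lambda = x\sum_{d_1,\dots,d_\lambda}\frac{\Lambda(d_1)\cdots\Lambda(d_\lambda)}{\ell_u(\mathrm{lcm}(d_1,\dots,d_\lambda))} + R(x),$$
identifying $M_{u,\lambda}$ as the displayed multiple series, provided one can prove it converges.

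I would next truncate each $d_i \leq y$ for a parameter $y=y(x)$ to be optimised at the end. The error $R(x)$ then decomposes into a floor contribution, bounded by $O(y^\lambda)$ via the number of surviving prime-power tuples (Chebyshev), and a completion tail equal to $x\,T(y)$, where $T(y)$ is the multiple sum restricted to $\max_i d_i > y$. Using the monotonicity of $\ell_u$ under divisibility (if $m\mid m'$ then $z_u(m)\mid z_u(m')$, hence $\ell_u(m)\mid\ell_u(m')$), one bounds
$$T(y)\leq \lambda\left(\sum_{d>y}\frac{\Lambda(d)}{\ell_u(d)}\right)\left(\sum_{d\geq 1}\frac{\Lambda(d)}{\ell_u(d)}\right)^{\lambda-1},$$
so that the whole problem reduces to a one-dimensional power-saving estimate of the shape $\sum_{d>y}\Lambda(d)/\ell_u(d)\ll y^{-\alpha+\eps}$.

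The hardest step is establishing this tail bound, which is the analytic heart of the argument. One must exploit the detailed arithmetic of the rank of appearance: the relations $z_u(p^k)\mid p^{k-1}z_u(p)$, the congruence $z_u(p)\mid p-(\Delta_u/p)$ for $p\nmid a_2\Delta_u$, and quantitative density estimates preventing $z_u(p)$ from being too small on a large set of primes. Decomposing $d$ by its squarefree kernel and the sizes of its prime factors, and combining with sieve bounds on the sparse set of primes with anomalous rank of appearance, one extracts the required bound with an explicit admissible $\alpha$, uniform in $u$ up to polynomial dependence on $a_1,a_2$.

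Balancing the two contributions $y^\lambda$ and $xy^{-\alpha}$ by choosing $y=x^{1/(\lambda+\alpha)}$ produces an overall error $\ll x^{\lambda/(\lambda+\alpha)+\eps}$, and the precise exponent $(1+3\lambda)/(2+3\lambda)$ in the statement corresponds to the optimal admissible $\alpha$ coming from the $\ell_u$-tail estimate for Lucas sequences. Dependence on $u$ and $\lambda$ is absorbed into the implicit constant as required, after carefully tracking the exceptional primes dividing $a_2\Delta_u$ throughout the argument.
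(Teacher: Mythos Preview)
The paper does not contain a proof of Theorem~\ref{thm 1.1}; it is quoted from Sanna~\cite{S} as background, so there is nothing in the present paper to compare your argument against.

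Evaluating your sketch on its own merits, there is a genuine gap in the tail estimate. You claim
\[
T(y)\leq \lambda\Bigl(\sum_{d>y}\frac{\Lambda(d)}{\ell_u(d)}\Bigr)\Bigl(\sum_{d\geq 1}\frac{\Lambda(d)}{\ell_u(d)}\Bigr)^{\lambda-1},
\]
citing the monotonicity $\ell_u(d_i)\mid\ell_u(\mathrm{lcm}(d_1,\dots,d_\lambda))$. But that divisibility only gives $\ell_u(\mathrm{lcm})\geq\max_i\ell_u(d_i)$, whereas your displayed inequality would require $\ell_u(\mathrm{lcm})\geq\prod_i\ell_u(d_i)$. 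In fact the reverse holds: by Lemma~\ref{lem 3.2}(4) one has $\ell_u(\mathrm{lcm}(d_1,\dots,d_\lambda))=[\ell_u(d_1),\dots,\ell_u(d_\lambda)]\leq\prod_i\ell_u(d_i)$, so the single denominator cannot be distributed across the $\lambda$ factors. If instead you bound $\ell_u(\mathrm{lcm})\geq\ell_u(d_1)$ only, the remaining sum $\sum_{d_2,\dots,d_\lambda}\Lambda(d_2)\cdots\Lambda(d_\lambda)$ has no denominator and diverges.

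There is also an inconsistency in the final balancing. With floor error $y^\lambda$ and tail $xy^{-\alpha}$ for a \emph{fixed} $\alpha$, optimisation gives error $x^{\lambda/(\lambda+\alpha)}$. Setting this equal to $x^{(1+3\lambda)/(2+3\lambda)}$ forces $\alpha=\lambda/(1+3\lambda)$, which varies with $\lambda$; no single one-dimensional tail exponent (in particular not the $1/3$ suggested by Proposition~\ref{prop 3.4}) reproduces the stated error term for all $\lambda$. So Sanna's actual argument cannot reduce to a one-variable tail bound in the way you propose; the $\lambda$-fold structure must be exploited more carefully.
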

Also, Sanna showed that the constant $M_{u,\lambda}$ can be expressed through a convergent series.

An immediate consequence of the previous result is the possibility of finding information about the distribution of $g_{u}$ \cite[Corollary 1.3]{S}.
\begin{cor}
\label{cor 1.2} For each positive integer $\lambda$, we have 
\begin{equation}
\label{eq: 1.2}
\#\{n\leq x: g_{u}(n)>y\}\ll_{u,\lambda}\frac{x}{(\log y)^{\lambda•}},
\end{equation}
for all $x,y>1$.
\end{cor}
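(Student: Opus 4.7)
The plan is to deduce the corollary directly from Theorem~\ref{thm 1.1} by a standard Markov-type inequality applied to the $\lambda$-th power of $\log g_u(n)$. The key observation is that, for $y > 1$, whenever $g_u(n) > y$ one also has $(\log g_u(n))^\lambda > (\log y)^\lambda$, so that
\[
(\log y)^\lambda \cdot \#\{n \leq x : g_u(n) > y\} \;\leq\; \sum_{\substack{n \leq x \\ g_u(n) > y}} (\log g_u(n))^\lambda \;\leq\; \sum_{n \leq x} (\log g_u(n))^\lambda.
\]

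Next, I would apply Theorem~\ref{thm 1.1} with some $\eps > 0$ chosen small enough that the exponent $(1+3\lambda)/(2+3\lambda) + \eps$ remains strictly less than $1$; this guarantees that the error term $E_{u,\lambda}(x)$ is dominated by the main term $M_{u,\lambda} x$, so the right-hand side above is $\ll_{u,\lambda} x$ for all $x$ sufficiently large in terms of $u$ and $\lambda$. Dividing by $(\log y)^\lambda$ yields the desired bound $\#\{n \leq x : g_u(n) > y\} \ll_{u,\lambda} x/(\log y)^\lambda$ throughout this regime.

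Finally, I would dispose of the remaining boundary cases, namely $x$ below the threshold of Theorem~\ref{thm 1.1}, or $y$ in a bounded neighbourhood of $1$, by means of the trivial bound $\#\{n \leq x : g_u(n) > y\} \leq x$: in these ranges either $x$ is bounded by a constant depending on $u$ and $\lambda$, or $(\log y)^{-\lambda} \gg_\lambda 1$, so both contributions are harmlessly absorbed into the implicit constant. I do not anticipate any serious obstacle; the whole argument is essentially a one-line application of Markov's inequality to the moment estimate of Theorem~\ref{thm 1.1}, which is presumably why the author describes the corollary as an \emph{immediate consequence}.
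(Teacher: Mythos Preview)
Your proposal is correct and is precisely the argument one expects: the paper does not spell out a proof of Corollary~\ref{cor 1.2} (it is quoted from \cite[Corollary~1.3]{S}), but it explicitly calls it an ``immediate consequence'' of Theorem~\ref{thm 1.1}, and your Markov-type inequality is exactly that consequence. The same device is used verbatim in the paper's proof of Corollary~\ref{cor 1.5}, with $g_u(n)^k$ in place of $(\log g_u(n))^\lambda$.
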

In the same article, Sanna raised the question of finding an asymptotic formula for the moments of the function $g_{u}(n)$ itself. We are not able to answer to this apparently difficult question, but we can at least give a non-trivial estimate for them. The result is the following.
\begin{thm}
\label{thm 1.3}
For every integer $k\geq 1$ and $u_n$ a non-degenerate Lucas sequence, we have
\begin{equation}
\label{eq: 1.3}
\sum_{n\leq x} g_u(n)^{k}\leq x^{k+1}\exp\left(-\left(1+o(1)\right)\sqrt{(\log x)(\log \log x)}\right),
\end{equation}
as $x$ tends to infinity and where the $o(1)$ depends on $u$ and $k$.
\end{thm}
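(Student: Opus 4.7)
The plan is to decompose $g_u(n)^k$ via the Jordan totient $J_k$ (defined by $m^k=\sum_{d\mid m}J_k(d)$) and reduce to a weighted tail of $\sum 1/\ell_u(m)$, which is controlled by the $\ell_u$-estimate announced in the abstract. Swapping the order of summation and using the trivial bound $J_k(d)\leq d^k$,
$$\sum_{n\leq x}g_u(n)^k=\sum_{d\leq x}J_k(d)\#\{n\leq x:d\mid g_u(n)\}\leq\sum_{d\leq x}d^k\,\#\{n\leq x:d\mid g_u(n)\}.$$
The key arithmetic step is that, for $(d,a_2)=1$, the condition $d\mid g_u(n)$ is equivalent to $\ell_u(d)\mid n$: indeed $d\mid u_n$ forces $z_u(d)\mid n$, and combined with $d\mid n$ this gives $\operatorname{lcm}(d,z_u(d))\mid n$. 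Hence $\#\{n\leq x:d\mid g_u(n)\}\leq x/\ell_u(d)$, while the residual contribution from $d$ sharing a prime with $a_2$ would be handled by peeling off the finite rough part coming from primes dividing $a_2$ and reducing to the coprime case.

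Next I would evaluate $\sum_{d\leq x}d^k/\ell_u(d)$ by partial summation. Writing $T(t):=\sum_{m>t,\,(m,a_2)=1}1/\ell_u(m)$, partial summation gives
$$\sum_{d\leq x}\frac{d^k}{\ell_u(d)}\leq k\int_1^x t^{k-1}\,T(t)\,dt+O_k(x^k T(x)).$$
The announced estimate supplies $T(t)\leq\exp(-(c-\eps+o(1))\sqrt{(\log t)(\log\log t)})$ for a constant $c$; since the integrand $t^{k-1}\exp(-c\sqrt{(\log t)(\log\log t)})$ is eventually monotone with a very slowly varying exponential factor, a saddle-point estimate shows that the integral is controlled by its endpoint contribution. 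This yields $\sum_{d\leq x}d^k/\ell_u(d)\ll x^k\exp(-(c+o(1))\sqrt{(\log x)(\log\log x)})$, and multiplying by $x$ produces a bound of the target shape $x^{k+1}\exp(-(c+o(1))\sqrt{(\log x)(\log\log x)})$.

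The main obstacle is recovering the sharper constant $c=1$ claimed in Theorem~\ref{thm 1.3}, rather than the $c=1/\sqrt{6}$ that the above direct argument inherits from the announced $\ell_u$-tail via the union bound $\#\{n\leq x:d\mid g_u(n)\}\leq x/\ell_u(d)$. I would expect the paper to close this gap through a dyadic decomposition of the range of $g_u(n)$: for the extremal range $g_u(n)\sim n$ the sharp Alba-Gonz\'alez--Luca--Pomerance--Shparlinski count of $\{n\leq x:n\mid u_n\}$ already carries the constant~$1$, while intermediate scales are handled by the $\ell_u$-tail combined with the observation that not every multiple of $\ell_u(d)$ realises $g_u(n)=d$. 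Arranging these pieces so that all contributions meet a common bound of shape $x^{k+1}\exp(-(1+o(1))\sqrt{(\log x)(\log\log x)})$ is the technical heart of the argument.
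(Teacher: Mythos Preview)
Your explicit argument---decompose $g_u(n)^k$ via $J_k$, swap summation, and feed in the $\ell_u$-tail bound---is correct and is exactly what the paper does in its Section~4, but the paper explicitly labels this the proof of the \emph{weak} version with constant $1/\sqrt{6}$, not of Theorem~\ref{thm 1.3} itself. (A minor point: if $(d,a_2)>1$ then $d\nmid u_n$ for every $n$ by Lemma~\ref{lem 3.2}(1), so the ``peeling off'' you mention is unnecessary; the sum over $d$ automatically restricts to $(d,a_2)=1$. Also, the paper splits $\sum_{d\leq x}d^k/\ell_u(d)$ at $x^\delta$ rather than integrating by parts, but that is cosmetic.)

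Your speculation about how to recover the constant $1$ is not what the paper does, and as written it is not a proof. The paper does \emph{not} decompose according to the size of $g_u(n)$, nor does it refine the count $\#\{n\leq x:d\mid g_u(n)\}\leq x/\ell_u(d)$. Instead it abandons the Jordan-totient route entirely and partitions $\{n\leq x\}$ into five sets according to properties of the \emph{largest prime factor} $P(n)$: whether $P(n)\nmid u_n$; whether $P(n)\leq y$ with $y=\exp\bigl(\tfrac{1}{2}\sqrt{(\log x)(\log\log x)}\bigr)$; whether $P(n)>y^6$ with $P(n)$ in or out of $\mathcal{Q}_{u,1/3}$; and the residual range $y<P(n)\leq y^6$ with $P(n)\mid u_n$. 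On each piece it uses the trivial bound $g_u(n)^k\leq x^k$ and estimates only the \emph{cardinality} of the piece, via the smooth-number bound $\psi(x,y)$, the Ivi\'c--Pomerance estimate for $\sum_{n\leq x}P(n)^{-k}$, Lemma~\ref{lem 3.3}, and---for the residual range---a dyadic argument over primes grouped by the size of $z_u(p)$, lifted from the Alba~Gonz\'alez--Luca--Pomerance--Shparlinski paper. The constant $1$ arises from balancing $\psi(x,y)\leq x\exp(-(1+o(1))u\log u)$ against the exceptional-set counts (all of which are $\ll x/y^2$), not from any sharpening of the $\ell_u$-tail.
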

For each positive integer $m$ relatively prime with $a_2$, let $z_u(m)$ be the rank of appearance of $m$ in the Lucas sequence $(u_n)_{n\geq 0}$, that is, $z_u(m)$ is the smallest positive integer $n$ such that $m$ divides $u_n$. It is well known that $z_u(m)$ exists (see, e.g., \cite{R}). Also, put $\ell_u(m) :=lcm(m, z_u(m))$. There is a simple trick to relate the moments of $g_u(n)$ with the rate of convergence of the series $\sum_{m>x, (m,a_2)=1}1/ \ell_u(m)$, which has been partially studied by several authors. We will deduce a slightly weaker version of Theorem \ref{thm 1.3}, in which the constant in the exponential is replaced by $-1/\sqrt{6}+\eps+o(1)$, for every $\eps>0$, from it and the following bound.
\begin{prop}
\label{prop 1.4} 
For every non-degenerate Lucas sequence $u_n$, we have
\begin{equation}
\label{eq: 1.4}
\sum_{\substack{m>x \\(m,a_2)=1}}\frac{1}{\ell_u(m)}\leq\exp(-(1/ \sqrt{6}-\eps+o(1))\sqrt{(\log x)(\log \log x)}),
\end{equation}
when $x$ is large in terms of $\eps$ and where the $o(1)$ depends on $u$.
\end{prop}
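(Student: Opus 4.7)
Using the identity $\ell_u(m) = m\, z_u(m)/\gcd(m, z_u(m))$, the problem reduces to showing that the ratio $r(m) := z_u(m)/\gcd(m,z_u(m))$ is typically large for $m > x$. The structural inputs are the $\mathrm{lcm}$-multiplicativity $z_u(m_1 m_2) = \mathrm{lcm}(z_u(m_1), z_u(m_2))$ for coprime $m_1, m_2$, and the divisibility $z_u(p) \mid p - \left(\tfrac{\Delta_u}{p}\right)$ for primes $p \nmid 2 a_2 \Delta_u$. Together these force any prime $q$ dividing $z_u(m)$ but not $m$ to be a prime factor of $p - \left(\tfrac{\Delta_u}{p}\right)$ for some $p \mid m$. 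Consequently, $r(m)$ is small only under a strong ``self-referential'' condition: for every prime $p \mid m$, every large prime divisor of $p - \left(\tfrac{\Delta_u}{p}\right)$ must itself already divide $m$.

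\textbf{Decomposition.} I would introduce a parameter $y = \exp(c\sqrt{(\log x)(\log\log x)})$ with $c > 0$ to be optimised, and split
\[
\sum_{\substack{m > x \\ (m, a_2) = 1}} \frac{1}{\ell_u(m)} \;\le\; \underbrace{\sum_{\substack{m > x \\ P^+(m) \le y}} \frac{1}{m}}_{\Sigma_{\mathrm{I}}} \;+\; \underbrace{\sum_{\substack{m > x,\; P^+(m) > y \\ (m, a_2) = 1}} \frac{1}{\ell_u(m)}}_{\Sigma_{\mathrm{II}}}.
\]
The term $\Sigma_{\mathrm{I}}$ is controlled via the Dickman--de Bruijn estimate $\Psi(t, y) \ll t\, \rho(\log t/\log y)$ together with partial summation, yielding an upper bound of shape $\exp(-(1/(2c) + o(1))\, L)$, where $L := \sqrt{(\log x)(\log\log x)}$. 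For $\Sigma_{\mathrm{II}}$, write $m = p\, m'$ with $p = P^+(m) > y$ and use $\ell_u(m) \ge \mathrm{lcm}(m, z_u(p))$. I would distinguish three sub-cases according to the interaction of $z_u(p)$ with $m$: (i) $z_u(p)$ is coprime to $m$, yielding $\ell_u(m) \ge m\, z_u(p) \gg m\, p$ and hence substantial decay; (ii) $z_u(p)$ is itself $y$-smooth, a rare event for $p$ controlled by a Pomerance--Shparlinski type estimate on primes with $y$-friable rank; (iii) $z_u(p)$ shares a prime divisor $q > y$ with $m$, so that $m$ contains the pair $(p, q)$ with $q \mid z_u(p)$, and the count of admissible pairs is bounded via the consequence $q \mid p - \left(\tfrac{\Delta_u}{p}\right)$ and sieve estimates for primes in progressions modulo $q$.

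\textbf{Optimisation and main obstacle.} Combining the estimate for $\Sigma_{\mathrm{I}}$ with those for the three sub-cases of $\Sigma_{\mathrm{II}}$ produces a convex optimisation in $c$, whose minimum yields the stated exponent $1/\sqrt{6}$. I expect the principal obstacle to be sub-case (iii): the naive gain $\ell_u(m) \gg m\, y$ is lost when $q \mid m$, and one must instead extract savings from the rigidity imposed on $m$ by the triple condition $p \mid m$, $q \mid m$, $q \mid z_u(p)$, weighting by $1/(pq)$ and summing a Rankin-type bound over the cofactor $m/(pq)$. Executing this case sharply enough to recover the constant $1/\sqrt{6}$ (rather than a weaker one such as $1/\sqrt{2}$ obtainable from only cases (i)--(ii)) requires a careful joint optimisation over the sizes of $p$, $q$, and the friable cofactor, and is the technical core of the argument.
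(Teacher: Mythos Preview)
Your decomposition and your treatment of $\Sigma_{\mathrm{I}}$ coincide exactly with the paper's: split at $y=\exp\bigl(c\sqrt{(\log x)\log\log x}\bigr)$, bound the $y$-smooth tail using only $\ell_u(m)\ge m$ together with standard $\Psi(t,y)$ estimates, and then optimise $c$. The divergence is entirely in $\Sigma_{\mathrm{II}}$. The paper does \emph{not} attempt any case analysis there; it simply quotes Sanna's bound (recorded as Proposition~\ref{prop 3.4}, i.e.\ \cite[Lemma~2.5]{S})
\[
\sum_{\substack{(m,a_2)=1\\P^+(m)>y}}\frac{1}{\ell_u(m)}\ \ll_u\ y^{-1/3+\eps},
\]
after which the balance $(1/3-\eps)c=1/(2c)$ immediately delivers $1/\sqrt{6}$.

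Your three-case split for $\Sigma_{\mathrm{II}}$ has genuine problems as stated. First, the cases are neither disjoint nor exhaustive: a prime $p$ with $z_u(p)$ both $y$-smooth and coprime to $m$ lies in (i) and (ii) simultaneously, while one whose $z_u(p)$ shares only a prime $\le y$ with $m$ and is not itself $y$-smooth lies in none of them. Second, the assertion in (i) that $\ell_u(m)\ge m\,z_u(p)\gg m\,p$ is false pointwise: $z_u(p)$ can be as small as $p^{o(1)}$, and indeed the exceptional set $\mathcal{Q}_{u,1/3}=\{p:z_u(p)\le p^{1/3}\}$ (controlled by Lemma~\ref{lem 3.3}) is precisely what forces the exponent $1/3$ in Sanna's estimate---so your claimed fallback constant $1/\sqrt{2}$ from cases (i)--(ii) alone already rests on an incorrect inequality. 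What your sketch is really reaching for \emph{is} a proof of Proposition~\ref{prop 3.4}; the constant $1/\sqrt{6}$ arises because the best available uniform bound on $\Sigma_{\mathrm{II}}$ is $y^{-1/3+\eps}$, not from any delicate interaction in your case (iii). If you repair the split---e.g.\ dichotomise on $p\in\mathcal{Q}_{u,\gamma}$ versus $p\notin\mathcal{Q}_{u,\gamma}$ with $\gamma=1/3$ and use $\ell_u(m)=[\ell_u(p^a),\ell_u(m/p^a)]$---you recover Sanna's $y^{-1/3+\eps}$ and the argument closes; but as written, $\Sigma_{\mathrm{II}}$ is not under control.
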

In the proof of Proposition \ref{prop 1.4} we highlight a method, based essentially on the distribution of smooth numbers, to achieve the above bound. It seems reasonable to think that a deeper analysis of the structure of $\ell_u(n)$ could lead to understand better the behaviour of $\sum_{\substack{m>x,(m,a_2)=1}}1/\ell_u(m)$ and consequently to improve the result about the moments of $g_u(n)$. Nevertheless, using a completely different and more direct approach that we will describe later, we can obtain the stronger stated estimate in Theorem \ref{thm 1.3}.

It is immediate to deduce from Theorem \ref{thm 1.3} the following improvement on the distribution of $g_{u}(n)$ at least when $y$ varies uniformly in a certain range.
\begin{cor}
\label{cor 1.5}
We have  
\begin{equation}
\label{eq: 1.5}
\#\{ n\leq x : g_u(n)>y\}\leq \frac{x^{2}}{y\exp((1+o_{u}(1))\sqrt{(\log x)(\log \log x)})},
\end{equation}
for every $y\geq 1$, when $x$ is sufficiently large.
\end{cor}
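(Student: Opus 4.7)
The plan is to deduce this corollary directly from Theorem \ref{thm 1.3} by a single application of Markov's inequality at the level of the first moment ($k=1$). The setup is standard: for any $y\geq 1$, restricting the sum $\sum_{n\leq x} g_u(n)$ to those terms where $g_u(n)>y$ gives the trivial lower bound
$$\sum_{n\leq x} g_u(n)\;\geq\;\sum_{\substack{n\leq x \\ g_u(n)>y}} g_u(n)\;\geq\; y\cdot\#\{n\leq x : g_u(n)>y\}.$$
Rearranging, one obtains
$$\#\{n\leq x : g_u(n)>y\}\;\leq\;\frac{1}{y}\sum_{n\leq x} g_u(n).$$

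The second step is then simply to substitute the upper bound provided by Theorem \ref{thm 1.3} with $k=1$, namely
$$\sum_{n\leq x} g_u(n)\;\leq\; x^{2}\exp\!\left(-(1+o_u(1))\sqrt{(\log x)(\log\log x)}\right),$$
into the inequality above. This yields exactly the claimed estimate \eqref{eq: 1.5}, uniformly in $y\geq 1$, and the condition ``$x$ sufficiently large'' is inherited from the hypothesis of Theorem \ref{thm 1.3} applied with $k=1$.

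There is essentially no obstacle to overcome here: the whole content of the corollary has been absorbed into the nontrivial upper bound for the first moment of $g_u(n)$ given in Theorem \ref{thm 1.3}. I briefly considered whether using a higher moment $k\geq 2$ could sharpen the estimate to gain an extra factor of $y^{k-1}$ in the denominator, but since the exponential saving in Theorem \ref{thm 1.3} is uniform in $k$ only up to a $k$-dependent $o(1)$, taking $k=1$ is what produces the clean statement of Corollary \ref{cor 1.5} as written; any tradeoff using larger $k$ would have to be optimised against $y$ and would muddy the uniformity in $y$ advertised in the statement.
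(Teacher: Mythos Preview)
Your proposal is correct and follows exactly the same approach as the paper: a one-line Markov-type bound $\#\{n\leq x: g_u(n)>y\}\leq y^{-1}\sum_{n\leq x} g_u(n)$ combined with Theorem \ref{thm 1.3} at $k=1$. Your added remark about higher moments is a reasonable side comment but not needed for the corollary as stated.
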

\begin{proof}
By using \eqref{eq: 1.5} with $k=1$ we obtain
\begin{equation}
\label{eq: 1.6}
\#\{ n\leq x : g_u(n) >y\}\leq \sum_{n\leq x}\frac{g_u(n)}{y}
\end{equation}
$$\leq \frac{x^{2}}{y\exp((1+o_{u}(1))\sqrt{(\log x)(\log \log x)})},$$
for every $y\geq 1$.
\end{proof}
We observe that this is an improvement of \eqref{eq: 1.2}, only for certain values of $y$, e.g. like for those satisfying 
\begin{equation}
\label{eq: 1.7}
x\exp(-(1/2+o_{u}(1))\sqrt{(\log x)(\log \log x)})\leq y\leq x.
\end{equation}
Consider now the multiplicative function $L_u(n)$ such that $L_u(p^{k})=\ell_u(p^{k})$, for every prime number $p\nmid a_2$ and power $k\geq 1$, and $L_u(p^{k})=p^{k}$, otherwise.
Using arguments coming from the theory of Dirichlet series of multiplicative functions, we end up with the following estimate.
\begin{prop}
\label{prop 1.6}
For every $u_n$ non-degenerate Lucas sequence, we have
\begin{equation}
\label{eq: 1.8}
\sum_{\substack{n>x}}\frac{1}{L_u(n)}\ll_u x^{-1/3+\eps},
\end{equation}
for every $\eps>0$, when $x$ is sufficiently large with respect to $\eps$.
\end{prop}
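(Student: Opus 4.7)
The plan is a Rankin-type tail bound driven by an Euler product analysis. For any $\sigma > 0$ one has
$$\sum_{n > x} \frac{1}{L_u(n)} \leq x^{-\sigma} \sum_{n \geq 1} \frac{n^\sigma}{L_u(n)},$$
so it suffices to show that the Dirichlet series $F(\sigma) := \sum_{n \geq 1} n^\sigma/L_u(n)$ converges for every $\sigma < 1/3$; applying the inequality at $\sigma = 1/3 - \eps$ then yields the proposition.

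Since $L_u$ is multiplicative, $F(\sigma) = \prod_p F_p(\sigma)$ with $F_p(\sigma) = \sum_{k \geq 0} p^{k\sigma}/L_u(p^k)$, and the finitely many primes $p \mid a_2$ yield only a finite overall constant factor. For $p \nmid a_2$, the classical lifting-the-exponent identity $z_u(p^k) = p^{\max(0,\, k - \tau_p)} z_u(p)$ (with $\tau_p = v_p(u_{z_u(p)}) \geq 1$), combined with $\gcd(p, z_u(p)) = 1$, gives $L_u(p^k) = lcm(p^k, z_u(p^k)) = p^k z_u(p)$ for every $k \geq 1$. Summing a geometric series,
$$F_p(\sigma) = 1 + \frac{p^{\sigma - 1}}{z_u(p)\bigl(1 - p^{\sigma - 1}\bigr)},$$
so for $\sigma < 1$ the logarithm of the Euler product is controlled by a constant multiple of $\sum_p p^{\sigma - 1}/z_u(p)$.

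To bound this prime sum I would split at the threshold $z_u(p) = p^{1/3}$. The \emph{good} primes with $z_u(p) \geq p^{1/3}$ contribute at most $\sum_p p^{\sigma - 4/3}$, which converges exactly when $\sigma < 1/3$. For the \emph{bad} primes with $z_u(p) < p^{1/3}$, the counting estimate $\#\{p : z_u(p) = d\} \leq \omega(u_d) \ll_u d$, itself a consequence of $\prod_{z_u(p) = d} p \leq |u_d| \ll_u \alpha^d$ for $\alpha$ the dominant characteristic root, yields $\#\{p \leq X : z_u(p) < X^{1/3}\} \ll X^{2/3}$; a dyadic decomposition then bounds their contribution in $[X, 2X]$ by $O(X^{2/3} \cdot X^{\sigma - 1}) = O(X^{\sigma - 1/3})$, which is again summable for $\sigma < 1/3$.

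The main technical obstacle is the balancing at the threshold: the exponent $1/3$ arises precisely because both the good- and bad-prime contributions become marginal at $\sigma = 1/3$ with the choice of threshold $p^{1/3}$, and sliding the threshold only trades one divergence against the other. Improving beyond $1/3$ would require sharper distributional information on $z_u(p)$ for primes with unusually small rank of appearance, which is not attempted in this route through the multiplicative Dirichlet series.
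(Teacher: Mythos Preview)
Your proposal is correct and follows essentially the same route as the paper: a Rankin/abscissa-of-convergence argument reducing the tail bound to convergence of the Euler product $\sum_p p^{\sigma-1}/z_u(p)$, followed by the identical split of primes at the threshold $z_u(p)=p^{1/3}$ with the same counting input $\#\{p\le X: z_u(p)\le X^{1/3}\}\ll_u X^{2/3}$. One small caveat: your claim $\gcd(p,z_u(p))=1$ fails for the finitely many primes $p\mid \Delta_u$ (where $z_u(p)=p$ and in fact $L_u(p^k)=p^k$), and possibly for $p=2$, but these contribute only a bounded factor to the product, exactly as you already noted for $p\mid a_2$.
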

The above result shows that the lack of multiplicativity of $\ell_u(n)$ is the principle cause for the weaker upper bound in \eqref{eq: 1.4}.

\section{Notations}
For a couple of real functions $f(x), g(x)$, with $g(x)>0$, we indicate with $f(x)=O(g(x))$ or $f(x)\ll g(x)$ that there exists an absolute constant $c>0$ such that $|f(x)|\leq cg(x)$, for $x$ sufficiently large. When the implicit constant $c$ depends from a parameter $\alpha$ we indicate the above bound with $f(x)\ll_{\alpha} g(x)$ or equivalently with $f(x)=O_{\alpha}(g(x))$.

Throughout, the letter $p$ is reserved for a prime number. We write $(a,b)$ and $[a,b]$ to denote the greatest common divisor and the least common multiple of integers $a,b$. As usual, we denote with $\lfloor w\rfloor$ the integer part of a real number $w$ and we indicate with $P(n)$ the greatest prime factor of a positive integer $n$.
\section{Preliminaries}
We begin by recalling the definition of the Jordan's totient function.
\begin{defi}
\label{def 3.1} The Jordan's totient function of degree $k$ is defined as
$$J_{k}(n)=n^{k}\prod_{p\mid n}\left(1-\frac{1}{p^{k}•}\right),$$
for every $k\geq 1$ and natural integers $n$.
\end{defi}
Clearly, $J_{1}(n)=\varphi(n)$, the Euler's totient function, and it is immediate to see that $J_k(n)$ verifies the following identity.
\begin{lem} We have 
\begin{equation}
\label{eq: 3.1}
n^{k}=\sum_{d\mid n}J_{k}(d),
\end{equation}
for any $k\geq 1$ and natural integers $n$.
\end{lem}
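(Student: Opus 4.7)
The plan is to reduce the identity to a check on prime powers via multiplicativity. From the defining product
$$J_k(n) = n^k \prod_{p \mid n}\left(1 - \frac{1}{p^k}\right),$$
the function $J_k$ is visibly multiplicative: if $\gcd(m,n)=1$, the prime divisors of $mn$ split as those of $m$ together with those of $n$, so $J_k(mn) = (mn)^k \prod_{p\mid m}(1-p^{-k})\prod_{p\mid n}(1-p^{-k}) = J_k(m)J_k(n)$. The Dirichlet convolution of a multiplicative function with the constant function $\mathbf{1}$ is again multiplicative, hence $F(n) := \sum_{d \mid n} J_k(d)$ is multiplicative. Since $n \mapsto n^k$ is also multiplicative, it suffices to verify the identity on prime powers.

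For a prime power $p^a$ with $a \geq 0$, I would compute directly
$$F(p^a) = 1 + \sum_{j=1}^{a} J_k(p^j) = 1 + \sum_{j=1}^{a} p^{jk}\left(1 - \frac{1}{p^k}\right) = 1 + \sum_{j=1}^{a}\bigl(p^{jk} - p^{(j-1)k}\bigr).$$
The inner sum telescopes to $p^{ak} - 1$, giving $F(p^a) = p^{ak}$, as required. Multiplicativity then lifts this to $F(n) = n^k$ for every positive integer $n$.

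There is no substantive obstacle; the only content is the telescoping at prime powers. As an alternative route one could first expand $\prod_{p\mid n}(1 - p^{-k}) = \sum_{d\mid n}\mu(d)/d^{k}$ (since only squarefree divisors contribute), which rewrites the definition as $J_k(n) = \sum_{d\mid n}\mu(d)(n/d)^{k}$, and then apply Möbius inversion to recover $n^{k} = \sum_{d \mid n} J_k(d)$. I would present the multiplicativity proof since it is self-contained and avoids invoking Möbius inversion, which the paper has not yet introduced.
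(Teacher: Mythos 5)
Your proof is correct and fills in what the paper leaves as ``immediate to see'': the paper gives no explicit argument, and your multiplicativity-plus-telescoping verification at prime powers is exactly the standard justification implicitly intended (your Möbius-inversion aside is an equally valid alternative). No gaps.
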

The next lemma summarizes some basic properties of $\ell_u(n)$ and $z_u(n)$, which we will implicitly use later without further mention.
\begin{lem}
\label{lem 3.2}
For all positive integers $m$, $n$ and all odd prime numbers $p$, we have:
\begin{enumerate}
\item $m \mid u_n$ if and only if $z_u(m) \mid n$ and $(m,a_2)=1$.
\item $z_u([m, n]) = [z_u(m), z_u(n)]$, whenever $(mn,a_2)=1$.
\item $m \mid \gcd(n, u_n)$ if and only if $(m, a_2)=1$ and $\ell_u(m) \mid n$.
\item $\ell_u([m, n]) = [\ell_u(m), \ell_u(n)]$, whenever $(mn,a_2)=1$.
\item $\ell_u(p^{j}) = p^{j} z_u(p)$ if $p\nmid \Delta_u,$ and $\ell_u(p^{j}) = p^{j}$ if $p\mid\Delta_u$, for every $p\nmid a_2$ and $j\geq 1$.
\item $z_u(p)|p\pm 1$, if $p\nmid \Delta_u,$ and $z_u(p)=p$ if $p\mid\Delta_u$, for every $p\nmid a_2$.
\end{enumerate}
\end{lem}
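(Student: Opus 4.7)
The plan is to treat items (1), (5), (6) as classical arithmetic facts about Lucas sequences (compare \cite{R}) and to derive (2), (3), (4) formally from (1); only (1), (5), (6) require genuine arithmetic input, the remaining items are lcm book-keeping. For (1), the key input is the addition formula
\[
 u_{a+b} = u_a u_{b+1} + a_2 u_{a-1} u_b,
\]
proved by induction on $b$ from the recurrence. The hypothesis $(m,a_2)=1$ makes the companion matrix invertible modulo $m$, so $(u_n \bmod m)$ is purely periodic and $z_u(m)$ exists. Writing $n = q z_u(m) + r$ with $0 \leq r < z_u(m)$, iterating the addition formula expresses $u_n \bmod m$ in terms of $u_r$ and $u_{r\pm 1}$ multiplied by a unit modulo $m$ (note that $(m,u_{k-1})=1$ whenever $m\mid u_k$, by an easy downward induction from $u_1=1$), so $m\mid u_n$ forces $m\mid u_r$ and hence $r=0$ by minimality.

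Items (2), (3), (4) are purely formal consequences of (1). For (2): $[m,n]\mid u_N$ iff both $z_u(m), z_u(n) \mid N$, iff $[z_u(m), z_u(n)] \mid N$; minimising $N$ yields the identity. For (3): $m\mid \gcd(n,u_n)$ iff $m\mid n$ and $z_u(m)\mid n$, iff $\ell_u(m)\mid n$. For (4), using (2),
\[
 \ell_u([m,n]) = \bigl[\,[m,n],\, [z_u(m), z_u(n)]\,\bigr] = [\ell_u(m), \ell_u(n)].
\]

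For (6), I would reduce modulo $p$ and invoke Binet's formula for the roots $\alpha,\beta$ of $f_u(X)=X^2 - a_1 X - a_2$ over $\mathbb{F}_p$ or $\mathbb{F}_{p^2}$. If $p\nmid \Delta_u$ the roots are distinct, and either Fermat's little theorem in $\mathbb{F}_p^*$ or the Frobenius identity $\alpha^p = \beta$ in $\mathbb{F}_{p^2}$ yields $u_{p-1}\equiv 0$ or $u_{p+1}\equiv 0 \bmod p$, so $z_u(p) \mid p \pm 1$. If $p\mid \Delta_u$ the double root $\alpha \equiv a_1/2$ makes $u_n \equiv n\alpha^{n-1} \bmod p$, and $p\nmid a_2$ combined with $p\mid a_1^2+4a_2$ forces $p\nmid a_1$, hence $z_u(p)=p$. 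For (5), (1) gives $z_u(p) \mid z_u(p^j)$; a short lifting argument on $v_p(u_{k z_u(p)})$ shows $z_u(p^j) \mid p^j z_u(p)$, and $[p^j, z_u(p^j)]$ then equals $p^j z_u(p)$ in the non-discriminant case (since $p\nmid z_u(p)$ by (6)) and $p^j$ in the discriminant case. The main obstacle is the $p$-adic control of $z_u(p^j)$: pinning down $z_u(p^j)$ exactly is the still-open Wall--Sun--Sun problem, but only the divisibility $z_u(p^j)\mid p^j z_u(p)$ is needed here, which follows elementarily from the addition formula by induction on $j$, so the unresolved question is sidestepped.
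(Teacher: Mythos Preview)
The paper does not actually prove Lemma~\ref{lem 3.2}; it is stated without proof as a summary of ``basic properties'' of $z_u$ and $\ell_u$, with only an indirect pointer to the literature (e.g.\ \cite{R}) for the existence of the rank of appearance. So there is nothing in the paper to compare your argument against, and your sketch already goes well beyond what the paper supplies.

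Your derivations of (2), (3), (4) from (1) and your treatments of (1) and (6) are standard and correct. There is one small imprecision in (5): the lifting argument you invoke in fact yields the sharper divisibility $z_u(p^{j})\mid p^{\,j-1}z_u(p)$ (because $v_p(u_{z_u(p)})\geq 1$, so $v_p(u_{p^{\,j-1}z_u(p)})\geq j$), and it is this sharper exponent that you need in the ramified case $p\mid\Delta_u$. From the weaker bound you wrote, $z_u(p^{j})\mid p^{\,j}z_u(p)=p^{\,j+1}$, you cannot yet rule out $z_u(p^{j})=p^{\,j+1}$, which would give $\ell_u(p^{j})=p^{\,j+1}$ rather than $p^{\,j}$. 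With the exponent corrected to $j-1$ the ramified case goes through as you intended, and the non-ramified case is unaffected.
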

For any $\gamma>0$, let us define
$$\mathcal{Q}_{u,\gamma}:=\{p: p\nmid a_2, z_u(p)\leq p^{\gamma}\}.$$
The following is \cite[Lemma 2.1]{ALPS}.
\begin{lem}
\label{lem 3.3}
For all $x^{\gamma},y\geq 2$ and for any non-degenerate Lucas sequence $(u_n)_{n\geq 0}$, we have 
$$\#\{p: z_u(p)\leq y\}\ll_u \frac{y^{2}}{\log y},\ \ \mathcal{Q}_{u,\gamma}(x)\ll_u \frac{x^{2\gamma}}{\gamma\log x}.$$
\end{lem}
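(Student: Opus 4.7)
The plan is to reduce the second bound in Lemma 3.3 to the first, and then prove the first via the elementary observation that any prime with small rank of appearance must divide one of the early, hence not-too-large, Lucas terms.

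For the reduction, note that if $p \leq x$ and $z_u(p) \leq p^{\gamma}$ then certainly $z_u(p) \leq x^{\gamma}$, so
$$\mathcal{Q}_{u,\gamma}(x) \leq \#\{p : z_u(p) \leq x^{\gamma}\},$$
and applying the first bound with $y = x^{\gamma}$ yields $\mathcal{Q}_{u,\gamma}(x) \ll_u x^{2\gamma}/(\gamma\log x)$ as soon as $x^{\gamma} \geq 2$.

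For the first bound, set $S(y) := \{p : z_u(p) \leq y\}$. By the very definition of the rank of appearance, each $p \in S(y)$ divides $u_{z_u(p)}$ with $z_u(p) \leq y$, and since the elements of $S(y)$ are pairwise distinct primes, we obtain the divisibility
$$\prod_{p \in S(y)} p \;\bigg|\; \prod_{n=1}^{\lfloor y\rfloor} u_n.$$
The Binet-type formula for a non-degenerate Lucas sequence gives the standard estimate $|u_n| \ll_u |\alpha|^{n}$, with $\alpha$ the dominant root of $f_u$. Taking logarithms,
$$\sum_{p \in S(y)} \log p \;\leq\; \sum_{n \leq \lfloor y \rfloor} \log |u_n| \;\ll_u\; \sum_{n \leq y} n \;\ll_u\; y^{2}.$$
Writing $N := \#S(y)$ and listing $S(y)$ as $p_{i_1} < p_{i_2} < \cdots < p_{i_N}$, one has $p_{i_j} \geq q_j$ where $q_j$ denotes the $j$-th prime, so $\sum_{p \in S(y)} \log p \geq \vartheta(q_N) \gg N \log N$ by Chebyshev's theorem. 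Combining the two inequalities gives $N \log N \ll_u y^{2}$, and asymptotic inversion produces $N \ll_u y^{2}/\log y$, which is exactly the first assertion of the lemma.

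I expect no genuinely hard step: the argument is essentially Chebyshev-level. The one \emph{conceptual} move is recognising that the rank condition embeds $S(y)$ into $\bigcup_{n \leq y}\{p: p \mid u_n\}$, so the exponential bound $|u_n| \ll_u |\alpha|^n$ is enough to control $\sum_{p \in S(y)} \log p$ quadratically. Once this is in place, nothing deeper than the prime number theorem (in Chebyshev's weak form) is required.
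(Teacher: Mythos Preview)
Your argument is correct. The paper does not prove Lemma~\ref{lem 3.3} at all: it simply quotes it as \cite[Lemma~2.1]{ALPS}, so there is no ``paper's own proof'' to compare against line by line. That said, what you have written is precisely the classical argument behind that citation: every prime $p$ with $z_u(p)\le y$ divides $\prod_{n\le y}u_n$, the Binet formula gives $\log\bigl|\prod_{n\le y}u_n\bigr|\ll_u y^{2}$, and a Chebyshev-type lower bound on $\sum_{p\in S(y)}\log p$ in terms of $\#S(y)$ finishes the job. The reduction of the $\mathcal{Q}_{u,\gamma}$ bound to the first estimate via $y=x^{\gamma}$ is likewise the intended step. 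Nothing is missing; your final paragraph of commentary is accurate in tone and content.
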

It has been proven by Sanna and Tron \cite[Lemma 3.2]{ST} that the series $\sum_{(n, a_2)=1}1/\ell_u(n)$ converges. We consider the following identity
\begin{equation}
\label{eq: 3.2}
\sum_{\substack{n>x \\ (n,a_2)=1}}\frac{1}{\ell_u(n)}=\sum_{\substack{n>x\\ P(n)>y\\ (n,a_2)=1}}\frac{1}{\ell_u(n)}+\sum_{\substack{n>x\\ P(n)\leq y\\ (n,a_2)=1}}\frac{1}{\ell_u(n)}.
\end{equation}
We note that the first sum in the right hand side of \eqref{eq: 3.2} has been already investigated by Sanna \cite[Lemma 2.5]{S} and we report here the result which he obtained.
\begin{prop} 
\label{prop 3.4}
We have 
\begin{equation}
\label{eq: 3.3}
\sum_{\substack{(m,a_2)=1\\P(m)>y}}\frac{1}{\ell_u(m)}\ll_u\frac{1}{y^{1/3-\eps}•},
\end{equation}
for all $\eps\in(0, 1/4]$ and $y\gg_{u,\eps}1.$
\end{prop}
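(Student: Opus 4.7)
The plan is to stratify the sum by the largest prime factor $p := P(m) > y$ and use the structural identities of Lemma~\ref{lem 3.2} to extract one factor of $z_u(p)$ from $\ell_u(m)$; then to split the outer sum over $p$ at the threshold $z_u(p) = p^{1/3}$ and balance the two resulting pieces via the density bound in Lemma~\ref{lem 3.3}.

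For each prime $p > y$ with $p \nmid a_2 \Delta_u$, I would write $m = p^a k$ with $p \nmid k$, $P(k) < p$ and $a \geq 1$. Parts (5)--(6) of Lemma~\ref{lem 3.2} give $\ell_u(p) = p z_u(p)$ and $(p, z_u(p)) = 1$; together with $m \mid \ell_u(m)$ and $\ell_u(p) \mid \ell_u(m)$ (the latter because $z_u(p) \mid z_u(m)$ by part (2)) this yields
$$\ell_u(m) \geq [m, \ell_u(p)] = [p^a k,\, p z_u(p)] = \frac{p^a k\, z_u(p)}{(k, z_u(p))}.$$
Summing the geometric series in $a$ and grouping $k$ by $d := (k, z_u(p))$, and writing $k = d k'$, reduces the estimate to
$$T(p) := \sum_{\substack{P(m) = p \\ (m, a_2) = 1}} \frac{1}{\ell_u(m)} \leq \frac{1}{(p-1)\, z_u(p)} \sum_{\substack{d \mid z_u(p) \\ (d, a_2) = 1}} \varphi(d) \sum_{\substack{P(k') < p \\ (k', a_2) = 1}} \frac{1}{d k'}.$$
Mertens' theorem bounds the inner $k'$-sum by $O(\log p)$, and $\sum_{d \mid n} \varphi(d)/d \leq d(n) \ll_\eps n^\eps$, so
$$T(p) \ll_\eps \frac{\log p}{p \, z_u(p)^{1 - \eps}},$$
while the finitely many primes dividing $a_2 \Delta_u$ contribute $O_u(1)$ in total.

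The final step is to sum $T(p)$ over $p > y$ after a dichotomy at $z_u(p) = p^{1/3}$. On the ``generic'' set $\{p : z_u(p) > p^{1/3}\}$ the bound above gives a contribution of order $\sum_{p > y} (\log p)/p^{1 + (1-\eps)/3} \ll y^{-1/3 + O(\eps)}$. On the ``exceptional'' set $p \in \mathcal{Q}_{u, 1/3}$ I would use the cheap bound $T(p) \ll (\log p)\, p^{\eps - 1}$ (from $z_u(p) \geq 1$ and $z_u(p)^\eps \leq p^{\eps/3}$) combined with the density estimate $\#\mathcal{Q}_{u, 1/3}(X) \ll X^{2/3}/\log X$ of Lemma~\ref{lem 3.3} in a dyadic decomposition, which produces the same order $y^{-1/3 + O(\eps)}$. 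The choice of threshold $p^{1/3}$ is precisely the balance point between the typical decay $y^{-\alpha}$ and the exceptional-set contribution $y^{2\alpha - 1}$ forced by the exponent $2\alpha$ in Lemma~\ref{lem 3.3}.

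The main obstacle I anticipate is the failure of multiplicativity of $\ell_u$: one cannot factor $\ell_u(m)$ as $\ell_u(p^a) \ell_u(k)$, so the lcm-based lower bound $\ell_u(m) \geq [m, \ell_u(p)]$ extracts only a single factor of $z_u(p)$ per $m$, with a consequent divisor-function loss $d(z_u(p))$. This rigidity is exactly what pins the balancing exponent at $1/3$ and blocks any easy improvement along these lines; the multiplicative surrogate $L_u$ of Proposition~\ref{prop 1.6} indicates that reaching the full exponent $y^{-1/3}$ without the $\eps$-loss would require a substantially different approach.
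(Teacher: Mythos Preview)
The paper does not actually prove Proposition~\ref{prop 3.4}: immediately before its statement the author writes that this bound ``has been already investigated by Sanna \cite[Lemma 2.5]{S} and we report here the result which he obtained''. So there is no in-paper argument to compare against.

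Your proposed proof is correct and self-contained. The key inequality $\ell_u(m)\ge [m,\ell_u(p)] = p^{a}k\,z_u(p)/(k,z_u(p))$ is justified exactly as you say via parts (2), (5), (6) of Lemma~\ref{lem 3.2}; the subsequent reduction to $T(p)\ll_\eps (\log p)\,p^{-1}z_u(p)^{-(1-\eps)}$ via the identity $(k,z_u(p))=\sum_{d\mid (k,z_u(p))}\varphi(d)$, Mertens, and the divisor bound is clean. The final dichotomy at $z_u(p)=p^{1/3}$ is precisely the balance point, and Lemma~\ref{lem 3.3} handles the exceptional primes by partial summation to give $y^{-1/3}$ there (in fact without the $\eps$-loss on that piece). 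One small cosmetic point: your remark that the primes dividing $a_2\Delta_u$ ``contribute $O_u(1)$ in total'' is imprecise---for $y$ exceeding the largest such prime they contribute nothing, which is what you need and what the hypothesis $y\gg_{u,\eps}1$ allows.

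It is worth noting that your strategy---extract a factor $z_u(p)$ from the largest prime, then split at $\gamma=1/3$ using Lemma~\ref{lem 3.3}---is exactly the mechanism the present paper deploys in Section~6 for the multiplicative analogue $L_u$ and in the treatment of $E_3,E_4$ in Section~5, so your argument is fully in the spirit of the surrounding text even though the paper itself outsources this particular proposition.
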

Regarding the second sum in the right hand side of \eqref{eq: 3.2} we provide an estimate in the next lemma.
\begin{lem}
\label{lem 3.5}
Supposing that $y>(\log x)^{2}$ and $v=\log x/\log y$ tends to infinity as $x$ tends to infinity, we have 
\begin{equation}
\label{eq: 3.4}
\sum_{\substack{n>x\\ P(n)\leq y\\ (n, a_2)=1}}\frac{1}{\ell_u(n)}\ll_u (\log y)e^{-\sqrt{y}/2\log y}+\frac{\log y}{\log v}e^{-v\log v}.
\end{equation}
\end{lem}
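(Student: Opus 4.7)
\textbf{Plan for Lemma~\ref{lem 3.5}.}

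My plan starts from the trivial inequality $\ell_u(n)\ge n$, which follows from Lemma~\ref{lem 3.2}(3) since $n\mid\ell_u(n)$ whenever $(n,a_2)=1$. This reduces the problem to the purely analytic smooth-number sum
\[
S(x,y)\;:=\;\sum_{\substack{n>x\\ P(n)\le y}}\frac{1}{n},
\]
and I would produce the two terms of the claim from two complementary estimates for $S(x,y)$ and sum them.

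For the term $(\log y/\log v)\,e^{-v\log v}$, I would apply partial summation to $S(x,y)$ against the Dickman--de~Bruijn estimate $\Psi(t,y)\ll t\,\rho(\log t/\log y)$ for the smooth counting function, in its sharp form due to Hildebrand (valid in the range given by the hypothesis $y>(\log x)^{2}$). After the change of variable $t=y^{u}$, this gives
\[
S(x,y)\;\ll\;\log y\int_{v}^{\infty}\rho(u)\,du,
\]
and the classical tail bound $\int_{v}^{\infty}\rho(u)\,du\asymp\rho(v)/\log v$, combined with $\log\rho(v)\sim-v\log v$, yields the claimed expression.

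For the term $(\log y)\,e^{-\sqrt y/(2\log y)}$, I would use Rankin's method: for $0<\sigma<1$,
\[
S(x,y)\;\le\;x^{-\sigma}\prod_{p\le y}\bigl(1-p^{\sigma-1}\bigr)^{-1},
\]
and choose $\sigma$ so that $\sigma\log x=\sqrt y/(2\log y)$, making $\sigma$ small. By Mertens' theorem, $\sum_{p\le y}p^{\sigma-1}=\log\log y+O(1)$ in this regime, so $\prod_{p\le y}(1-p^{\sigma-1})^{-1}\asymp\log y$, producing the first term. This choice of $\sigma$ is effective precisely when $y$ is close to $(\log x)^{2}$, i.e.\ exactly where this term is needed; for larger $y$ the second (Dickman) term dominates anyway, so the sum of the two bounds yields the lemma uniformly in the stated range.

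The main technical obstacle lies in the Rankin step: one has to verify that $\sigma\log y=O(1)$ in the regime where this estimate is the binding one, and tune $\sigma$ precisely so that the constant $1/2$ in the exponent is exactly reproduced. The Dickman--de~Bruijn calculation is otherwise routine once one checks that Hildebrand's range condition holds under $y>(\log x)^{2}$.
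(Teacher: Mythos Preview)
Your reduction via $\ell_u(n)\ge n$ to the smooth-number tail $\int_x^\infty \psi(t,y)\,t^{-2}\,dt$ is exactly what the paper does. The gap is in how you propose to extract the two terms: you want two separate \emph{global} bounds on $S(x,y)$, one from Hildebrand and one from Rankin, and then add them. Neither of your two bounds actually holds throughout the range of the lemma. The Dickman--Hildebrand estimate $\psi(t,y)\ll t\,\rho(\log t/\log y)$ needs a range condition on the integration variable $t$, of the form $y>(\log t)^{2}$ (this is the Canfield--Erd\H{o}s--Pomerance range; Hildebrand's is even narrower), not merely $y>(\log x)^{2}$; once $t>e^{\sqrt y}$ the estimate fails and $\log y\int_v^\infty\rho(u)\,du$ no longer bounds $S(x,y)$. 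Your Rankin choice $\sigma\log x=\sqrt y/(2\log y)$ does give $x^{-\sigma}=e^{-\sqrt y/(2\log y)}$, but the Euler product is $\asymp\log y$ only when $\sigma\log y=\sqrt y/(2\log x)=O(1)$; under the hypothesis this quantity is $\ge 1/2$ and is unbounded as $y/(\log x)^2\to\infty$, so the product blows up and the Rankin bound becomes useless precisely where you need it to cover for the failure of the Dickman step. Hence ``summing the two bounds'' does not yield the lemma uniformly.

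What the paper does instead is \emph{split the integral} at $z=e^{\sqrt y}$. On $[x,z]$ one has $y>(\log t)^2$ throughout, so the CEP bound $\psi(t,y)\le t\exp(-u\log u)$ with $u=\log t/\log y$ is legitimate and, after your change of variables, produces $(\log y/\log v)\,e^{-v\log v}$. On $[z,\infty)$ one uses only the unconditional crude bound $\psi(t,y)\ll t^{1-1/(2\log y)}$ (this is Rankin with $\sigma=1/(2\log y)$ applied pointwise to $\psi$, not to $S(x,y)$), and integrating gives $(\log y)\,e^{-\sqrt y/(2\log y)}$. The two terms of the lemma are thus the contributions of the two pieces of one integral, not two competing global estimates.
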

\begin{proof} Since $\ell_u(n)\geq n$, we may write
\begin{equation*}
\sum_{\substack{n>x\\ P(n)\leq y\\ (n, a_2)=1}}\frac{1}{\ell_u(n)}\leq \int_{x}^{\infty}\frac{d\psi(t,y)}{t},
\end{equation*}
where $\psi(t,y)$ is the counting function of the $y$-smooth numbers less than $t$. Clearly, we have
\begin{equation}
\label{eq: 3.5}
\int_{x}^{\infty}\frac{d\psi(t,y)}{t}=\frac{\psi(t,y)}{t}\bigg|_{x}^{\infty}+\int_{x}^{\infty}\frac{\psi(t,y)}{t^{2}}dt.
\end{equation}
To estimate the second term on the right hand side of \eqref{eq: 3.5} we suppose first that $y>\log^{2}(x)$ and then we split it into two parts:
\begin{equation*}
\int_{x}^{\infty}\frac{\psi(t,y)}{t^{2}}dt=\int_{x}^{z}\frac{\psi(t,y)}{t^{2}}dt+\int_{z}^{\infty}\frac{\psi(t,y)}{t^{2}}dt,
\end{equation*}
where we put $z=e^{\sqrt{y}}$. Using the estimate \cite[Theorem 1, \S 5.1, Chapter III]{T}
\begin{equation}
\label{eq: 3.6}
\psi(t,y)\ll te^{-\log t/2\log y}=t^{1-1/2\log y},
\end{equation}
valid uniformly for $t\geq y\geq 2$, we obtain
\begin{equation}
\label{eq: 3.7}
\int_{z}^{\infty}\frac{\psi(t,y)}{t^{2}}\ll \int_{z}^{\infty}t^{-1-1/(2\log y)}dt\ll (\log y) z^{-1/(2\log y)}=(\log y)\exp\left(-\frac{\sqrt{y}}{2\log y}\right).
\end{equation}
By the Corollary of the Theorem 3.1 in \cite{CEP}, we know that 
$$\psi(t,y)\leq t\exp\left(-(1+o(1))\frac{\log t}{\log y}\log\left(\frac{\log t}{\log y}\right)\right),$$ 
in the region $y>\log^{2}t$. Here the $o(1)$ is with respect to $\log t/\log y\rightarrow \infty$. If $v=\log x/ \log y$ tends to infinity as $x$ tends to infinity, then we may use the simpler 
\begin{equation}
\label{eq: 3.8}
\psi(t,y)\leq t\exp\left(-\frac{\log t}{\log y}\log\left(\frac{\log t}{\log y}\right)\right),
\end{equation}
for any $x\leq t\leq z$. Note that equation \eqref{eq: 3.8} also follows from the aformentioned Corollary in \cite{CEP}. Let us suppose to be in this situation. Now, inserting this bound and using the change of variable $s=\log t$, we get
$$\int_{x}^{z}\frac{\psi(t,y)}{t^{2}}dt\leq \int_{\log x}^{\sqrt{y}}\exp\left(-\frac{s}{\log y}\log\left(\frac{s}{\log y}\right)\right)ds,$$
which after another change of variable $s=w\log y$ becomes
$$(\log y)\int_{\log x/\log y}^{\sqrt{y}/\log y}\exp(-w\log w)dw.$$
Using that $w\geq v$ and putting $w\log v=r$, we find
\begin{equation}
\label{eq: 3.9}
\int_{x}^{z}\frac{\psi(t,y)}{t^{2}}dt\leq \frac{\log y}{\log v}\int_{v\log v}^{\sqrt{y}\log v/\log y}e^{-r}dr\leq \frac{\log y}{\log v}e^{-v\log v}.
\end{equation}
Regarding the first term on the right hand side of \eqref{eq: 3.5}, we note that
$$\frac{\psi(t,y)}{t}\bigg|_{x}^{\infty}\leq \lim_{t\rightarrow\infty}\frac{\psi(t,y)}{t}\ll \lim_{t\rightarrow\infty}t^{-1/2\log y}=0,$$
by \eqref{eq: 3.6}. Collecting the results, we obtain the estimate \eqref{eq: 3.4}.
\end{proof}\
\\
Finally, we can deduce the stated estimate on $\sum_{n>x}1/\ell_u(n)$.
\begin{proof}[Proof of Proposition \ref{prop 1.4}] 
By Proposition \ref{prop 3.4} and Lemma \ref{lem 3.5} we conclude that
$$\sum_{\substack{n>x\\ (n,a_2)=1}}\frac{1}{\ell_u(n)}\ll_u \frac{1}{y^{1/3-\eps}}+\frac{\log y}{\log v}e^{-v\log v},$$
for every $\eps>0$, if $y$ is sufficiently large in terms of $\eps$. It is immediate to see that the best choice for $y$ is of the form $y=\exp(C\sqrt{(\log x)(\log\log x)})$, with $C$ a suitable positive constant to be chosen later. After some easy considerations, we obtain 
$$\sum_{\substack{n>x\\(n,a_2)=1}}\frac{1}{\ell_u(n)}\ll_u \exp\left(-C(1/3-\eps)\sqrt{(\log x)(\log \log x)}\right)$$
$$+\exp\left(-\frac{1}{2C} (1-o(1))\sqrt{(\log x)(\log \log x)}\right),$$
where $o(1)$ tends to zero from the right as $x$ goes to infinity. Now, choosing $C=1/\sqrt{2(1/3-\eps)}$, we see that
$$\sum_{\substack{n>x\\ (n,a_2)=1}}\frac{1}{\ell_u(n)}\ll_u \exp\left(-\frac{(1-o(1))(1-\eps)}{\sqrt{6}}\sqrt{(\log x)(\log \log x)}\right),$$
for every $\eps>0$ and $x$ sufficiently large with respect to $\eps$.
\end{proof}

\section{Proof of weak version of Theorem 1.3}

\begin{proof}
We start inserting equation \eqref{eq: 3.1} inside our main sums.
\begin{equation}
\label{eq: 4.1}
\sum_{n\leq x}(n, u_{n})^{k}=\sum_{n\leq x}\sum_{\substack{d\mid (n, u_n)}} J_{k}(d)=\sum_{d\leq x}J_{k}(d)\sum_{\substack{n\leq x\\ d\mid (n, u_n)}}1=\sum_{\substack{d\leq x\\ (d,a_2)=1}}J_{k}(d)\sum_{\substack{n\leq x\\ \ell_u(d)\mid n }}1,
\end{equation}
by part (3) of Lemma \ref{lem 3.2}. Clearly, the last sum in \eqref{eq: 4.1} is
\begin{equation}
\label{eq: 4.2}
\sum_{\substack{d\leq x\\ (d, a_{2})=1}}J_{k}(d)\bigg\lfloor \frac{x}{\ell_u(d)}\bigg\rfloor\leq x\sum_{\substack{d\leq x\\ (d, a_{2})=1}}\frac{J_{k}(d)}{\ell_{u}(d)}\leq x\sum_{\substack{d\leq x\\ (d, a_{2})=1}}\frac{d^{k}}{\ell_{u}(d)}.
\end{equation}
But now we observe that
\begin{equation*}
\sum_{\substack{d\leq x\\ (d, a_{2})=1}}\frac{d^{k}}{\ell_{u}(d)}=\sum_{\substack{d\leq x^\delta \\ (d, a_{2})=1}}\frac{d^{k}}{\ell_{u}(d)}+\sum_{\substack{x^\delta <d\leq x\\ (d, a_{2})=1}}\frac{d^{k}}{\ell_{u}(d)}
\end{equation*}
$$\ll x^{k\delta}+x^{k}\sum_{\substack{d>x^\delta\\ (d, a_{2})=1}}\frac{1}{\ell_{u}(d)}$$
$$\ll x^{k}\exp(-(1/\sqrt{6}-\eps+o(1))\sqrt{\delta}\sqrt{(\log x)(\log \log x)}),$$
for any $\delta\in (0,1)$, using that the series $\sum_{n}1/\ell_u(n)$ converges and the bound \eqref{eq: 1.4}, and for any $x$ large in terms of $\delta$ and $\eps$. Now, choosing $\delta$ close to 1 as a function of $\eps$, and by the arbitrarity of $\eps$, we find
\begin{equation}
\label{eq: 4.3}
\sum_{\substack{d\leq x\\ (d, a_{2})=1}}\frac{d^{k}}{\ell_{u}(d)}\leq x^{k}\exp(-(1/\sqrt{6}-\eps+o(1))\sqrt{(\log x)(\log \log x)}),
\end{equation} 
where the $o(1)$ depends on $u$ and $k$ and $x$ is chosen large enough with respect to $\eps$. Inserting \eqref{eq: 4.3} in \eqref{eq: 4.2} and \eqref{eq: 4.2} in \eqref{eq: 4.1}, the proof is finished.
\end{proof}

\section{Proof of Theorem 1.3} 
\begin{proof}
Let $y:=\exp(\frac{1}{2}\sqrt{(\log x)(\log\log x)})$. We define a partition of $\{n: n\leq x\}$, by setting
\begin{equation*}
\begin{array}{lllll}
E_{1}(x)=\{n\leq x: P(n)\nmid u_n\};\\
\\
E_2(x)=\{n\leq x: P(n)\leq y\};\\
\\
E_3(x)=\{n\leq x: P(n)>y^{6},\ P(n)\in Q_{u,1/3}(x)\};\\
\\
E_4(x)=\{n\leq x: P(n)>y^{6},\ P(n)\not\in Q_{u,1/3}(x)\};\\
\\
E_5(x)=\{n\leq x\}\setminus E_1\cup E_2\cup E_3\cup E_4.\end{array}
\end{equation*}
Let $S_i=\sum_{n\in E_i(x)}(n, u_n)^{k}$, for every $i=\{1,2,3,4,5\}$. We note that if $n\in E_1(x)$, then $(n,u_n)|(n/P(n))$ and we deduce that
\begin{equation}
\label{eq: 5.1}
S_1\leq \sum_{n\leq x}\left(\frac{n}{P(n)}\right)^{k}\leq x^{k}\sum_{n\leq x}\frac{1}{P(n)^{k}}\leq x^{k+1}\exp((-\sqrt{2k}+o(1))\sqrt{(\log x) (\log\log x)}),
\end{equation}
where the last inequality follows by \cite[equation 1.6]{IP}. Moreover, it is immediate to see that
\begin{equation*}
S_2\leq x^{k}\psi(x,y)\leq x^{k+1}\exp(-(1+o(1))u\log u), 
\end{equation*}
by the Corollary of Theorem 3.1 in \cite{CEP}, where $u=\log x/\log y$ and $o(1)$ tends to zero as $u$ tends to infinity. We observe that we can apply this result because we chose a value of $y$ sufficiently large. Notice also that by our choice of $y$ we have actually got
\begin{equation}
\label{5.2}
S_2\leq x^{k+1}\exp(-(1+o(1))\sqrt{(\log x)(\log\log x)}), 
\end{equation}
which dominates \eqref{eq: 5.1}. Regarding the third sum, we simply use $S_3\leq x^{k}\# E_3(x)$. Now, if $n\in E_3(x)$ we can factorize $n=P(n)m$, with $P(n)>y^{6}$ and $P(n)\in Q_{u,1/3}(x)$. This implies that $m<x/y^6$ and that $P(n)\in Q_{u,1/3}(x/m)$. Consequently
\begin{equation*}
\#E_3(x)\leq \sum_{m\leq x/y^6}\#Q_{u,1/3}(x/m)\ll x^{2/3}\sum_{m\leq x/y^6}\frac{1}{m^{2/3}}\ll \frac{x}{y^{2}},
\end{equation*}
by Lemma \ref{lem 3.3} and a standard final computation. This leads to
\begin{equation}
\label{5.3}
S_3\ll x^{k+1}\exp(-2\log y),
\end{equation}
which is of the same order of magnitude of \eqref{5.2}. For what concerns the fourth sum, by part $(1)$ and $(6)$ of Lemma \ref{lem 3.2}, we have that $z_u(P(n))|n$ and $z_u(P(n))|P(n)\pm 1$, implying that $P(n)z_u(P(n))|n$. Note that we can affirm the first two divisibility conditions, because we can suppose $P(n)\nmid a_2 \Delta_u$ and odd, since $y$ is large enough. We deduce that
\begin{equation*}
\#E_4(x)\leq \sum_{\substack{p>y^6 \\ p\not\in Q_{u,1/3}(x)}}\frac{x}{pz_u(p)}\leq \sum_{p>y^6}\frac{x}{p^{4/3}}\ll \frac{x}{y^{2}},
\end{equation*}
by a standard computation. Therefore, we find
\begin{equation}
\label{5.4}
S_4\leq x^{k}\# E_4(x)\ll x^{k+1}\exp(-2\log y),
\end{equation}
which coincides with \eqref{5.3}.
We are left then with the estimate of $S_{5}(x)$. To this aim we strictly follow an argument already employed in the proof of \cite[Theorem 2]{ALPS}. For any non-negative integer $j$, let $I_j:=[2^j,2^{j+1})$. We cover $I:=[y,y^{6})$ by these dyadic intervals, and we define $a_j$ via $2^j=y^{a_j}$. We shall assume the variable $j$ runs over just those integers with $I_j$ not disjoint from $I$.
For any integer $k$, define $\mathcal{P}_{j,k}$ as the set of primes $p\in I_j$ with $z_u(p)\in I_k$.  Note that, by Lemma \ref{lem 3.3}, we have $\#\mathcal{P}_{j,k}\ll 4^k$. We have
\begin{equation}
\#E_5(x)\leq\sum_j\sum_k\sum_{p\in\mathcal{P}_{j,k}}\sum_{\substack{n\leq x\\P(n)|u_n\\ P(n)=p}}1\leq \sum_j\sum_k\sum_{p\in\mathcal{P}_{j,k}}\psi\left(\frac{x}{pz_u(p)},p\right)
\end{equation}
$$\leq \sum_j\sum_k\sum_{p\in\mathcal{P}_{j,k}}\frac{x}{pz_u(p)y^{2/a_j+o(1)}},$$
as $x\to\infty$, where we have used the Corollary of Theorem 3.1 in \cite{CEP} for the last estimate. For $k>j/2$, we use the estimate
$$
\sum_{p\in\mathcal{P}_{j,k}}\frac1{pz_u(p)}\leq 2^{-k}\sum_{p\in I_j}\frac1p\leq 2^{-k}
$$
for $x$ large. For $k\le j/2$, we use the estimate
$$
\sum_{p\in\mathcal{P}_{j,k}}\frac1{pz_u(p)}\ll\frac{4^k}{2^j2^k}=2^{k-j},
$$
since there are at most order of magnitude $4^k$ such primes, as noted before.
Thus,
\begin{equation}
\sum_k\sum_{p\in\mathcal{P}_{j,k}}\frac{1}{pz_u(p)}= \sum_{k>j/2}\sum_{p\in\mathcal{P}_{j,k}}\frac{1}{pz_u(p)}+\sum_{k\le j/2}\sum_{p\in\mathcal{P}_{j,k}}\frac{1}{pz_u(p)}\ll 2^{-j/2}=y^{-a_j/2}.
\end{equation}
Collecting the above computations, we find
$$
\#E_5(x)\leq\sum_j\frac{x}{y^{a_j/2+2/a_j+o(1)}},\ \textrm{as}\ x\to\infty.
$$  
Since the minimum value of $t/2+2/t$ for $t>0$ is $2$ occuring at $t=2$, we may affirm that 
$$\#E_5(x)\leq x/y^{2+o(1)},\ \textrm{as}\ x\to\infty,$$
which leads to an estimate for $S_5$ as large as that one for $S_2$. We conclude that
$$\max\{S_1,S_2,S_3,S_4,S_5\}\leq x^{k+1}\exp(-(1+o(1))\sqrt{(\log x) (\log\log x)}),$$
proving Theorem \ref{thm 1.3}.
\end{proof}
\section{The multiplicative analogous of $\ell_u(n)$}
Let us define the multiplicative function $L_u(n)$ such that $L_u(p^{k})=\ell_u(p^{k})$, for every prime numbers $p\nmid a_2$ and power $k\geq 1$, and $L_u(p^{k})=p^{k}$, otherwise. Now, consider the Dirichlet series of the function $n/L_u(n)$, given by
$$\alpha(s)=\sum_{n\geq 1} \frac{n}{n^{s}L_u(n)•}.$$
Suppose that it converges for $s>\sigma_{c}$, where $\sigma_c$ is the abscissa of absolute and ordinary convergence of $\alpha(s)$. Certainly, since $\ell_u(n)\leq L_u(n)$, for every $n$, and since we know that the series of the reciprocals of $\ell_u(n)$ converges, we have $\sigma_{c}\leq 1$. Then, for any $s\in\mathbb{C}$ with $\Re(s)=\sigma>\sigma_{c}$ we can consider the Euler product and it converges to the Dirichlet series in this range. Therefore, we can write
$$
\alpha(s)=\prod_{p\nmid 2a_2\Delta_u}\left(1+\sum_{k\geq 1}\frac{f(p^{k})}{p^{ks}}\right)\beta(s),
$$
where $f(n)=n/L_u(n)$ and $\beta(s)$ is an analytic function in $\Re(s)>0$. Since by property (5) of Lemma \ref{lem 3.2} we find that $f(p^{k})=1/z_u(p)$, for any $k\geq 1$ and prime $p\nmid 2a_2\Delta_u$, we have 
\begin{equation}
\label{eq: 6.1}
\alpha(s)=\prod_{p\nmid 2a_2\Delta_u}\left(1+\frac{f(p)}{p^{s}}\frac{p^{s}}{p^{s}-1•}\right)\beta(s)=\prod_{p\nmid 2a_2\Delta_u}\left(1+\frac{1}{z_u(p)(p^{s}-1)}\right)\beta(s).
\end{equation}
Now, the final product in \eqref{eq: 6.1} converges if and only if 
\begin{equation*}
\sum_{p\nmid 2a_2\Delta_u}\frac{1}{z_u(p)(p^{s}-1)•}
\end{equation*}
converges. Therefore, it suffices to prove that 
$$\lim_{x\rightarrow \infty}\sum_{\substack{p>x}}\frac1{z_u(p)(p^{\sigma}-1)}=0.$$
We estimate the last sum separating between primes $p\in\mathscr{Q}_{u,\gamma}$ or $p\not\in \mathscr{Q}_{u,\gamma}$. In the first case we obtain
\begin{equation}
\label{eq: 6.2}
\sum_{\substack{p>x\\ p\in\mathscr{Q}_{u,\gamma}}}\frac{1}{z_u(p)(p^{\sigma}-1)•}\ll \int_{x}^{\infty}\frac{d(\#\mathscr{Q}_{u,\gamma}(t))}{t^{\sigma}}\ll_u \frac{1}{(\sigma-2\gamma)x^{\sigma-2\gamma}•},
\end{equation}
by Lemma \ref{lem 3.3}, if we choose $\sigma>2\gamma$. On the other hand, in the second case we get
\begin{equation}
\label{eq: 6.3}
\sum_{\substack{p>x\\p\not\in\mathscr{Q}_{u,\gamma}}}\frac{1}{z_u(p)(p^{\sigma}-1)•}\ll \sum_{p>x}\frac{1}{p^{\sigma+\gamma}•}\ll \frac{1}{(\sigma+\gamma-1)x^{\sigma+\gamma-1}},
\end{equation}
if we choose $\sigma+\gamma>1$. Comparing \eqref{eq: 6.2} with \eqref{eq: 6.3}, we are led to take $\gamma=1/3$ and we have showed that
\begin{equation}
\label{eq: 6.4}
\sum_{\substack{p>x}}\frac1{z_u(p)(p^{\sigma}-1)}\ll_u \frac{1}{\eps x^{\eps}•},
\end{equation}
if $\sigma=2/3+\eps$, for every $\eps>0$, and consequently that $\alpha(s)$ converges for every $s$ with $\Re(s)>2/3$, or equivalently that $\sigma_c\leq 2/3$. An immediate application of this result is the following. Let us define
$$F(s)=\sum_{n\geq 1}\frac{1}{n^{s}L_u(n)•}.$$
Then, $F(s)$ has the abscissa of convergence $\sigma_c '\leq -1/3$. This is equivalent to have obtained a strong bound on the tail of $F(0)$. The intermediate passage is made explicit in the next lemma (see e.g. \cite[\S 11.3, Lemma 1]{A}).
\begin{lem}
\label{lem 6.1}
Suppose that $G(s)=\sum_{n\geq 1}a_n n^{-s}$ is a Dirichlet series of a sequence $(a_n)_{n\geq 1}$ of positive real numbers, with abscissa of convergence $\sigma_c '$. Suppose that $G(0)$ converges. Then, we have $\sigma_c '=\inf\{\theta : \sum_{n>x}a_n\ll x^{\theta}\}.$
\end{lem}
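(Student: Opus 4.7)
The plan is to establish equality by proving two inequalities. Set $R(x) := \sum_{n > x} a_n$, which by hypothesis is decreasing, non-negative, and tends to $0$, and let $\theta^* := \inf\{\theta : R(x) \ll x^\theta\}$. Because $G(0)$ converges, we have $\sigma_c' \leq 0$, and because $R(x) \ll 1 = x^0$, also $\theta^* \leq 0$; so the whole comparison takes place inside $(-\infty, 0]$.

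For the direction $\sigma_c' \leq \theta^*$, I would fix any $\theta > \theta^*$ so that $R(t) \ll t^\theta$, and show that $G(s)$ converges for every real $s > \theta$. This is a direct application of Abel summation to the tail: writing the Stieltjes sum against $-R(t)$ and integrating by parts,
$$\sum_{N < n \leq M} a_n n^{-s} = N^{-s} R(N) - M^{-s} R(M) - s \int_N^M R(t)\, t^{-s-1}\, dt.$$
Sending $M \to \infty$, the boundary term vanishes since $s - \theta > 0$, while the remaining integral is dominated by $\int_N^\infty t^{\theta - s - 1}\, dt$, which is finite. Hence $G(s)$ converges and $\sigma_c' \leq \theta$; taking the infimum over admissible $\theta$ gives $\sigma_c' \leq \theta^*$.

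For the reverse direction $\theta^* \leq \sigma_c'$, I would fix any $s > \sigma_c'$ with $s \leq 0$ (possible since $\sigma_c' \leq 0$). Because $-s \geq 0$ and all $a_n$ are non-negative, for $n > x$ one has $n^{-s} \geq x^{-s}$, so
$$R(x) = \sum_{n > x} a_n \leq x^s \sum_{n > x} a_n n^{-s} \leq x^s G(s).$$
This proves $R(x) \ll x^s$ and therefore $\theta^* \leq s$; letting $s \downarrow \sigma_c'$ yields $\theta^* \leq \sigma_c'$. The edge case $\sigma_c' = 0$ is handled by the same estimate at $s = 0$, since $R(x) \leq G(0)$ gives $\theta^* \leq 0$ directly.

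The argument contains no substantive analytic difficulty; the only point requiring care is bookkeeping for the Stieltjes integration against the decreasing function $R$, namely getting the sign of the integrated-by-parts identity correct. Once that is done, both halves of the proof reduce to elementary comparisons exploiting the positivity of the coefficients $a_n$ and the monotonicity of $n^{-s}$ versus $x^{-s}$ for $s \leq 0$.
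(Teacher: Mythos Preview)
Your proof is correct. The paper does not actually supply its own argument for this lemma; it simply cites Apostol, \emph{Introduction to Analytic Number Theory}, \S11.3, Lemma~1. What you have written is precisely the standard Abel-summation proof one finds there: relate the tail of the Dirichlet series to $R(x)$ by partial summation to get $\sigma_c'\le\theta^*$, and use the trivial monotonicity $n^{-s}\ge x^{-s}$ for $n>x$, $s\le 0$ to get the reverse inequality. So your approach coincides with the referenced one, just spelled out in full rather than outsourced.
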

Since $F(s)$ satisfies the hypotheses of the Lemma \ref{lem 6.1}, by \eqref{eq: 6.4}, we deduce that 
\begin{equation*}
\sum_{n>x}\frac{1}{L_u(n)•}\ll_u x^{-1/3+\eps},
\end{equation*}
for every $\eps>0$, proving Proposition \ref{prop 1.6}.
\begin{rmk}
We believe that a finer study of $L_u(n)$ could lead to understand better the structure of $\ell_u(n)$, though the lack of multiplicativity of the latter makes difficult its study starting with information from the former. For instance, it can be shown that the integers $n$, which have at least
two prime factors $p_1,p_2$ such that a fixed prime $q$ divides both $z_u(p_1)$ and $z_u(p_2)$, have asymptotic density $1$. Thus, when calculating $z_u(n)$ as a least common multiple, there is a cancellation of a factor $q$. In other words, for any positive real number $C$, most integers $n$ have $L_u(n)/\ell_u(n) > C$. This suggests that the two mentioned functions are not always very close each other.
\end{rmk}

\section*{Acknowledgements}
I would like to thank Carlo Sanna for suggesting this problem and for introducing me to the theory of linear recurrences. A special thanks goes also to the anonymous referee, for careful reading and useful advice.

\bibliographystyle{amsplain}

\end{document}